\documentclass[11pt]{amsart}
\usepackage[utf8]{inputenc}
\usepackage[T1]{fontenc}
\usepackage[english]{babel}

\usepackage[a4paper, margin=3.5cm]{geometry}

\usepackage{tikz}

\usepackage{amssymb}
\usepackage{empheq}
\usepackage{esint}
\usepackage{xcolor}
\usepackage{hyperref}
\hypersetup{colorlinks = true, linkbordercolor = red, citecolor = blue}


\usepackage[alphabetic, msc-links]{amsrefs}
\renewcommand{\eprint}[1]{\href{https://arxiv.org/abs/#1}{\texttt{arXiv:#1}}}

\numberwithin{equation}{section}

\newtheoremstyle{newthm}
{.5em} 
{.5em} 
{\it}       
{}          
{\bf}       
{.}         
{.5em}      
{}          %
\theoremstyle{newthm}
\newtheorem{thm}{Theorem}[section]

\newtheorem{prp}[thm]{Proposition}
\newtheorem{conj}[thm]{Conjecture}

\newtheoremstyle{newdef}
{.5em} 
{.5em} 
{}          
{}          
{\bf}       
{.}         
{.5em}      
{}          %
\theoremstyle{newdef}
\newtheorem{defn}[thm]{Definition}
\newtheorem{rem}[thm]{Remark}

\newcommand{\R}{\mathbf{R}}

\renewcommand{\S}{\mathbf{S}}

\renewcommand{\a}{\alpha}

\newcommand{\g}{\gamma}
\renewcommand{\d}{\delta}
\newcommand{\e}{\varepsilon}

\renewcommand{\th}{\theta}
\renewcommand{\k}{\kappa}

\newcommand{\n}{\nu}
\newcommand{\x}{\xi}

\renewcommand{\t}{\tau}

\newcommand{\G}{\Gamma}

\newcommand{\p}{\partial}

\begin{document}

\title{Singularities of the network flow with symmetric initial data}

\author{Matteo Novaga}
\address{Dipartimento di Matematica, Università di Pisa, Largo Bruno Pontecorvo 5, 56127 Pisa, Italy.}
\email{matteo.novaga@unipi.it}

\author{Luciano Sciaraffia}
\address{Dipartimento di Matematica, Università di Pisa, Largo Bruno Pontecorvo 5, 56127 Pisa, Italy.}
\email{luciano.sciaraffia@phd.unipi.it}

\subjclass[2010]{}
\keywords{}

\begin{abstract}
We study the formation of singularities for the curvature flow of networks when the initial data is symmetric with respect to a pair of perpendicular axes and has two triple junctions.
We show that, in this case, the set of singular times is finite.
\end{abstract}

\maketitle

\tableofcontents


\section{Introduction}\label{sec:intro}

The \textit{Mean Curvature Flow} is one of the best studied geometric evolution equations, in particular its one-dimensional version, often called the \textit{Curve Shortening Flow}.
This last flow is completely understood thanks to the works of Gage--Hamilton and Grayson \cites{GageHamilton-Convex1986, Grayson-RoundPoints1987}: a closed, embedded curve in the plane becomes convex in finite time and then shrinks to a \textit{round point}.
A natural and interesting generalisation of this flow is the \textit{Network Flow}, also known as \textit{Multiphase Mean Curvature Flow} in higher dimensions, where instead of considering a single curve the underlying geometric object is a \textit{regular network}, that is, a finite union of embedded curves which can meet only at their endpoints, and at each multiple junction only three curves meet forming equal angles of $2\pi/3$ 
(more precise definitions will be provided in Section \ref{sec:setup}).
This last condition, called after Herring, arises naturally because of the variational structure of the flow, since these triple junctions minimise length locally.

The network flow has been thoroughly studied, although a complete understanding as in the case of a single curve is far from being achieved.
One of the first results in this line comes from Bronsard--Reitich \cite{Bronsard-Reitich-1993}, where they showed short time existence for the flow of \textit{triods}, i.e. networks consisting of three curves and one triple junction, and with Neumann boundary conditions.
Subsequently, the works of Mantegazza--Novaga--Tortorelli \cite{MNT-NetsI-2004} and later Magni--Mantegazza--Novaga \cite{MMN-NetsII-2016} studied the singularity formation under the flow, with Dirichlet boundary conditions, stating in which cases the flow exists for all times and reaches in the limit the Steiner tree spanned by the three endpoints.

More recently, in \cites{Goesswein-Menzel-Pluda-Existence2023, MNPS-Survey2018} a general proof of existence of a solution to the network flow with regular initial data was given.
It was also shown that the flow can be extended to a maximal existence time at which, if finite, a singularity forms: either the $L^2$-norm of the curvature blows-up, or the length of one of the curves goes to zero.
Moreover, as in the case of the curve shortening flow, there holds a \textit{geometric uniqueness:} every other solution starting at the same initial network is just a reparametrisation of the flow.
Thus, to give a complete description of the flow, it becomes crucial to understand and classify the singularities that can arise.

In contrast to what happens in the case of a single curve, it could be possible during the network flow that the length of one or more curves goes to zero while the curvature of the network remains bounded.
This kind of phenomenon is often called a \textit{type-0 singularity}, and allows the flow to approach an irregular network with junctions of multiplicity greater than three.
Because of this, it becomes a compelling question to understand if it is possible to start a regular flow when the initial network fails to satisfy the Herring condition.
It turns out that, thanks to the results of \cites{Ilmanen-Neves-Schulze-Shortexistence-2019, LMPS-2021}, such an irregular network can serve as initial data for a regular flow, enabling is continuation beyond some singularities, albeit not in a unique way.
Such flows are constructed by locally replacing an irregular junction by one of the self-similar, tree-like expanding solitons obtained in \cite{Mazzeo-Saez-2011} according to the number of curves concurring at said junction.
In this way, new edges might “emerge” flowing out of the junction, and the nonuniqueness of the continuation is directly tied to the nonuniqueness of the expanders.
In any case, the number of possible geometric solutions is classified by the number of expanders at each irregular junction \cite{LMPS-2021}*{Cor. 8.7}.
Thanks to these findings, it becomes possible to continue the flow past a type-0 singularity.
We remark that it could also be possible to restart the flow in other potential scenarios where the curvature does blow-up, but these are not within the scope of this discussion.
The interested reader might see the discussion present in \cite{MNPS-Survey2018}*{Sec. 10.4}.

Much of the analysis of singularities can be done conditional to the so-called \textit{multiplicity-one conjecture}, which states that every limit of parabolic rescalings of the flow around a fixed point is a flow of embedded networks with multiplicity one.
Indeed, Mantegazza--Novaga--Pluda \cite{MNP-Type0trees2022} showed, conditionally to this statement, that if there are no loops in the initial network, or in other words, the initial datum is a tree, then only type-0 singularities can occur.
Hence, in the case of trees the flow could in principle be continued indefinitely.

It turns out that the multiplicity-one conjecture is true for networks with at most two triple junctions, as it was shown in \cite{MNP-2juncs-2017}.
As a result, a complete description of the possible singularities was obtained in this case.
In particular, if no loop disappears at a singular time, there is only one sensible way to continue the flow, which the authors call the \textit{standard transition} (cf. \cite{LMPS-2021}*{Cor. 8.8}).
This situation arises when the two triple junctions coalesce into a single point, forming a quadruple junction with equal opposing angles of $\pi/3$ and $2\pi/3$.
However, the previous results only give a short time existence with no uniform control over the lifespan of the flow, which makes it difficult to rule out the possible accumulation of singular times.
This is the only question remaining to be answered to give a complete description and a global time existence theorem in this case.

In this note we address this problem in the case of networks with two triple junctions which are symmetric to a pair of perpendicular axes.
We will refer to this class of networks simply as \textit{symmetric}.
With this condition, there are only four possible cases: the \textit{tree}, the \textit{lens}, the $\th$-\textit{network}, and the \textit{eyeglasses}, which are illustrated in Figure \ref{fig:net-types}.

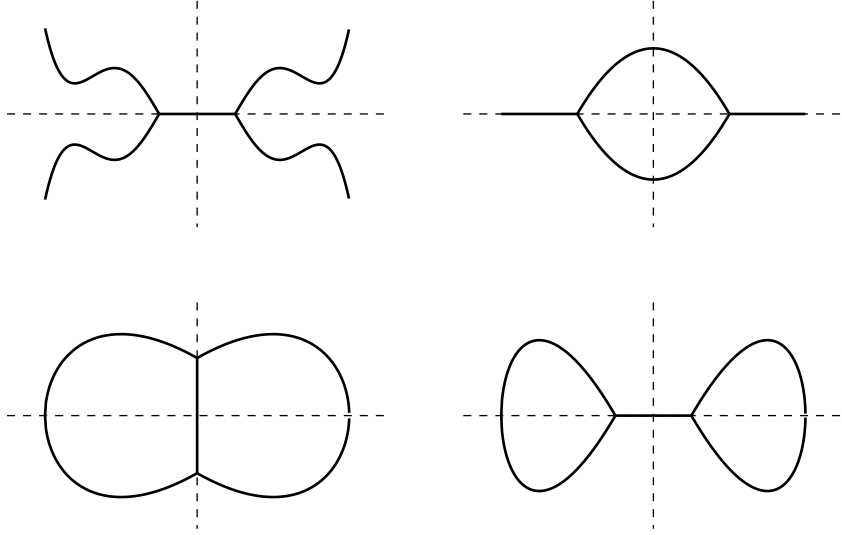
\begin{figure}
\centering

\begin{tikzpicture}[x=1cm,y=1cm,scale=1]

\draw[line width=.5pt, dashed, shift={(-3 cm, 4 cm)}] (-2.5,0) -- (2.5,0);
\draw[line width=.5pt, dashed, shift={(-3 cm, 4 cm)}] (0,1.5) -- (0,-1.5);

\draw[line width=1pt, smooth, shift={(-3 cm, 4 cm)}] plot[samples=200,domain=.5:2] (\x, {1.73*(\x-.5)*((\x-.5)^3-1.75*(\x-.5)^2+1)});
\draw[line width=1pt, smooth, shift={(-3 cm, 4 cm)}] plot[samples=200,domain=.5:2] (\x, {-1.73*(\x-.5)*((\x-.5)^3-1.75*(\x-.5)^2+1)});
\draw[line width=1pt, smooth, shift={(-3 cm, 4 cm)}] (-.5,0) -- (.5,0);
\draw[line width=1pt, smooth, shift={(-3 cm, 4 cm)}] plot[samples=200,domain=-2:-.5] (\x,{1.73*(-\x-.5)*((-\x-.5)^3-1.75*(-\x-.5)^2+1)});
\draw[line width=1pt, smooth, shift={(-3 cm, 4 cm)}] plot[samples=200,domain=-2:-.5] (\x,{-1.73*(-\x-.5)*((-\x-.5)^3-1.75*(-\x-.5)^2+1)});

\draw[line width=.5pt, dashed, shift={(3 cm, 4 cm)}] (-2.5,0) -- (2.5,0);
\draw[line width=.5pt, dashed, shift={(3 cm, 4 cm)}] (0,1.5) -- (0,-1.5);

\draw[line width=1pt, smooth, shift={(3 cm, 4 cm)}] plot[samples=200,domain=-1:1] (\x,{.87*(\x*\x-1)});
\draw[line width=1pt, smooth, shift={(3 cm, 4 cm)}] plot[samples=200,domain=-1:1] (\x,{-.87*(\x*\x-1)});
\draw[line width=1pt, smooth, shift={(3 cm, 4 cm)}] (-2,0) -- (-1,0);
\draw[line width=1pt, smooth, shift={(3 cm, 4 cm)}] (1,0) -- (2,0);

\draw[line width=.5pt, dashed, shift={(-3 cm, 0 cm)}] (-2.5,0) -- (2.5,0);
\draw[line width=.5pt, dashed, shift={(-3 cm, 0 cm)}] (0,1.5) -- (0,-1.5);

\draw[line width=1pt, smooth, shift={(-3 cm, 0 cm)}] plot[samples=2000,domain=0:2] (\x,{.54*(\x+1)*sqrt(2-\x)});
\draw[line width=1pt, smooth, shift={(-3 cm, 0 cm)}] plot[samples=2000,domain=0:2] (\x,{-.54*(\x+1)*sqrt(2-\x)});
\draw[line width=1pt, smooth, shift={(-3 cm, 0 cm)}] (0,.77) -- (0,-.77);
\draw[line width=1pt, smooth, shift={(-3 cm, 0 cm)}] plot[samples=200,domain=-2:0] (\x,{.54*(-\x+1)*sqrt(2+\x)});
\draw[line width=1pt, smooth, shift={(-3 cm, 0 cm)}] plot[samples=200,domain=-2:0] (\x,{-.54*(-\x+1)*sqrt(2+\x)});

\draw[line width=.5pt, dashed, shift={(3 cm, 0 cm)}] (-2.5,0) -- (2.5,0);
\draw[line width=.5pt, dashed, shift={(3 cm, 0 cm)}] (0,1.5) -- (0,-1.5);

\draw[line width=1pt, smooth, shift={(3 cm, 0 cm)}] plot[samples=2000,domain=.5:2] (\x,{(\x-.5)*sqrt(4-2*\x)});
\draw[line width=1pt, smooth, shift={(3 cm, 0 cm)}] plot[samples=2000,domain=.5:2] (\x,{-(\x-.5)*sqrt(4-2*\x)});
\draw[line width=1pt, smooth, shift={(3 cm, 0 cm)}] (-.5,0) -- (.5,0);
\draw[line width=1pt, smooth, shift={(3 cm, 0 cm)}] plot[samples=200,domain=-2:-.5] (\x,{(-\x-.5)*sqrt(4+2*\x)});
\draw[line width=1pt, smooth, shift={(3 cm, 0 cm)}] plot[samples=200,domain=-2:-.5] (\x,{-(-\x-.5)*sqrt(4+2*\x)});

\end{tikzpicture}

\caption{The four types of symmetric networks with two triple junctions: the \textit{tree,} the \textit{lens,} the $\th$-\textit{network,} and the \textit{eyeglasses.}}
\label{fig:net-types}

\end{figure}

Before stating our main result, we mention that the case of the lens has already been studied and settled in a slightly more general case in \cites{Bellettini-Novaga-Lens2011, Schnuerer-etal-Lens2011}.
In complete analogy with \cites{GageHamilton-Convex1986, Grayson-RoundPoints1987}, it is proved that a (non-compact) lens shaped network which is symmetric with respect to one axis eventually becomes convex and approaches a straight line in finite time, as the enclosed region desappears and the curvature blows-up.
The following theorem gives a complete description in the remaining cases.

\begin{thm}\label{thm:main}
Let $\G_0$ be a symmetric regular network with two triple junctions.
Then there is a maximal time $T>0$ and a unique network flow $\{ \G(t) \}_{0 \leq t < T}$ with initial data $\G_0$, such that the set of its singular times is a finite subset of $(0,T]$, in particular there is no accumulation of singularities.
Moreover:
\begin{itemize}
    \item if $\G_0$ is a tree, then $T = \infty$ (global existence) and $\lim_{t \to \infty} \G(t)$ is either a (standard) cross or a Steiner tree;
    \item otherwise $T < \infty$, $\G(t)$ becomes eyeglasses-shaped after the last type-0 singularity, and the curvature blows up as the enclosed regions vanish with $t \uparrow T$.
\end{itemize}
\end{thm}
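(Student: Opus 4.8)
The plan is to combine the symmetry of the flow with the von Neumann--Mullins area evolution and the already available classification of singularities for networks with two triple junctions.

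First I would record that the flow preserves the symmetry. Since the reflections across the two axes are isometries of the plane, reflecting a solution produces another network flow with the same initial datum $\G_0$; by the geometric uniqueness recalled in the introduction it must coincide with $\{\G(t)\}$. Consequently every $\G(t)$ is symmetric, the two triple junctions remain on one of the axes joined by a straight segment lying on that axis (the \emph{bar}), and each $\G(t)$ is one of the four types of Figure \ref{fig:net-types}. In particular the bar has zero curvature, so under the flow it has vanishing normal velocity and its endpoints move only tangentially along the axis; thus the only parameter governing a type-0 singularity is the length of the bar, and the continuation at such a singularity is unique, being the symmetric standard transition of \cite{LMPS-2021}.

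Next I would extract finiteness of the existence time in the non-tree cases from the von Neumann--Mullins relation. For a bounded region whose boundary arcs meet at $n$ triple junctions the interior angle at each junction is $2\pi/3$, so Gauss--Bonnet gives $\int\kappa\,ds = 2\pi - n\pi/3$ and hence $\frac{d}{dt}\mathrm{Area} = -\int\kappa\,ds = \frac{\pi}{3}(n-6)$. Every enclosed region of the lens, the $\th$-network and the eyeglasses has $n\in\{1,2\}$, so its area decays at the constant rate $4\pi/3$ or $5\pi/3$; since areas are continuous across type-0 singularities (the bar encloses no area), the total enclosed area is strictly decreasing at a rate bounded below until it reaches zero. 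This forces $T<\infty$ and identifies the terminal singularity as the vanishing of the enclosed regions, where the curvature must blow up: a type-0 singularity leaves the regions with positive area and, for two triple junctions, is always continuable, hence cannot be terminal. In the tree case there are no regions, the total length is monotone decreasing, and the flow exists for all time. I would then classify the singular times using the validity of the multiplicity-one conjecture for two triple junctions: at each singular time either the bar collapses, producing a symmetric quadruple junction with opposing angles $\pi/3$ and $2\pi/3$ that resolves through the standard transition into a bar along the perpendicular axis, or an enclosed region vanishes with unbounded curvature. Tracking the combinatorics through a standard transition, a tree is sent to a tree (the bar merely rotates by $\pi/2$), while the collapse of the central bar of a $\th$-network fuses the two arcs into two loops and yields the eyeglasses; the reverse transition would send eyeglasses back to $\th$.

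The main obstacle, and the heart of the argument, is to rule out an accumulation of these type-0 singularities, i.e. to show that the bar cannot keep collapsing. The strategy is to prove that in the eyeglasses configuration the bar length stays bounded below until the loops disappear: using the junction velocity law together with the fact that the bar is a straight segment, the shrinking loops push the two triple junctions apart rather than together, so no further bar collapse occurs and the $\th\to$ eyeglasses transition is the last type-0 singularity. A parallel convergence analysis handles the tree: monotonicity of length yields subconvergence as $t\to\infty$ to a stationary symmetric configuration with the prescribed endpoints, necessarily a Steiner tree (positive bar length) or the cross (bar length tending to zero only in the limit), so only finitely many genuine bar collapses occur. Together with the finiteness of $T$ and of the number of regions this shows the singular times form a finite set and yields the stated dichotomy. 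I expect the quantitative control of the junction motion---showing the eyeglasses bar does not collapse and that the tree admits at most finitely many flips---to be the genuinely hard step, the remaining pieces being symmetry bookkeeping or direct consequences of the cited results.
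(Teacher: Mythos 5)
Your symmetry reduction, the von Neumann--Mullins computation giving $T<\infty$ in the non-tree cases, and the bookkeeping of which type transitions into which are all fine (the paper obtains the same facts from Proposition \ref{prp:2juncs} and the non-existence of $\th$-shaped shrinkers). But the heart of the theorem---ruling out accumulation of type-0 singularities---is precisely the step you leave unproved, and the mechanism you sketch for it does not work. You claim that in the eyeglasses configuration the shrinking loops ``push the two triple junctions apart,'' so that the $\th\to$ eyeglasses transition is the last one. The tangential velocity of a junction along the axis is governed by the sign of the curvature of the loop arcs \emph{at the junction}; for general symmetric data the loops can be pinched near the junction, that curvature can have the opposite sign, and the bar can collapse again. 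Indeed the paper explicitly states that a $\th$-network transitions to eyeglasses ``and vice-versa,'' so eyeglasses-to-$\th$ transitions do occur and your claim is false as stated. The tree case has the same problem: subconvergence to a stationary configuration, deduced from length monotonicity, is perfectly compatible with infinitely many bar collapses whose amplitudes tend to zero---that is exactly the accumulation scenario the theorem must exclude, and it is consistent with convergence to a cross (cf.\ the flow of \cite{Pluda-Pozzetta-Lojasiewicz2023}*{Thm. 6.1}, which converges to a cross in infinite time).

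The paper's actual mechanism is different and is what your argument is missing: a quantity that is a nonnegative integer, nonincreasing along the smooth flow, and that \emph{strictly} decreases at every standard transition. Concretely, after reducing by symmetry to a single curve $\g$ in the first quadrant, one counts the intersections $i(t)$ of $\g(\cdot,t)$ with a fixed line $\ell$ through the origin making an angle in $(\frac{\pi}{6},\frac{\pi}{3})$ with the axis. Angenent's Sturmian theorem (Proposition \ref{prp:intersecs}) makes $i(t)$ finite and nonincreasing during each smooth epoch, and a local graphical argument across a transition---writing $\g$ as a graph $u$ over the axis, setting $w = u - mx$, and combining the lower bound $w(0,t)\geq c\sqrt{t-T}$ from \cite{Ilmanen-Neves-Schulze-Shortexistence-2019}*{Thm. 1.1} with the maximum principle for the strictly parabolic equation satisfied by $w$---shows that $i(t)$ drops by at least one at each type-0 singularity, regardless of the direction in which the junctions subsequently move. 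Finiteness of the set of singular times follows at once. Without such a monotone integer quantity (or a proof of your lower bound on the bar length, which as noted cannot hold in general), your proposal does not close the essential gap.
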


\begin{rem}
In the case of a tree, we cannot rule out a singularity at infinity, as the example in \cite{Pluda-Pozzetta-Lojasiewicz2023}*{Thm. 6.1} shows.
There the authors construct a globally defined flow which stays regular for every time and converges to a cross in infinite time.
\end{rem}

Let us briefly describe what are the dynamics in this situation.
Since the initial datum is symmetric, it is easy to see that the evolution also stays symmetric until the first singularity forms.
If we encounter a type-0 singularity, i.e. the length of one of the curves goes to zero and the curvature of the network remains bounded, then again by symmetry the vanishing curve must be the straight edge passing through the origin.
In this way, the two triple junctions collide, and we can the apply the results in \cite{Ilmanen-Neves-Schulze-Shortexistence-2019}*{Thm. 1.1} and \cite{LMPS-2021}*{Thm. 1.1} to restart the flow (cf. \cite{MNP-2juncs-2017}*{Thm. 6.1}).
Since there is a unique self-similar expanding soliton flowing out a standard cross, and it has the same symmetries as the cross \cite{Mazzeo-Saez-2011}*{Prp. 2.2}, we may conclude that the evolution remains symmetric as before.
A tree transitions to a tree, and a $\th$-network transitions to eyeglasses and vice-versa.
This process can continue as long as the curvature remains bounded, and as stated, this can only happen a finite number of times, so any oscillatory behaviour is excluded.

The proof of Theorem \ref{thm:main} relies on a result by Angenent \cite{AngParabolicII91}*{Thm 1.3}, which we present as Proposition \ref{prp:intersecs}, adapted to this singular case.
Its proof is grounded in the \textit{Sturmian theorem}, as stated by Angenent \cite{AngSing91}*{Thm. 2.1} (cf. \cite{AngZero88}*{Thms. C and D}).
In essence, this theorem asserts that if $u \in C^\infty(Q_T)$ is a solution to a linear parabolic equation in $Q_T := [0,1] \times [0,T]$, and if $u(x,t) \neq 0$ for all $0 \leq t \leq T$ and $x = 0,1$, then at any time $t \in (0,T]$, the number of zeroes of $u(\cdot,t)$ will be finite.
Furthermore, this number decreases as a function of $t$ and strictly decreases whenever $u(\cdot,t)$ has a multiple zero.
For additional applications, we refer to \cite{AngNodal91}, and for a detailed proof, the reader may consult \cite{AngZero88}.

Although we are dealing with a very special case of the network flow, it is reasonable to believe that Theorem \ref{thm:main} holds true in general for tree-like networks, with appropriate modifications.

\begin{conj}
The number of singular times during the evolution of a tree is finite.
If no boundary curve disappears during the evolution, then the flow exists for all positive times and converges to a (possibly degenerate) minimal network.
\end{conj}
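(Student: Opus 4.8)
The plan is to view the evolution as a finite chain of smooth symmetric flows glued across type-0 singularities, to bound the number of these gluings by a Sturmian intersection count, and then to analyse the terminal behaviour case by case. Existence and uniqueness on each smooth piece are supplied by the cited short-time theory together with geometric uniqueness, so the only genuinely new points are the finiteness of the singular set and the classification. First I would record that both axes of symmetry persist: the reflection of the flow is again a solution with the same symmetric initial datum, so by geometric uniqueness it coincides with the original flow. As noted in the introduction this forces the two triple junctions onto one axis, say the $x$-axis, at $(\pm a(t),0)$, with the central edge lying on that axis; hence the only edge that can disappear in a type-0 singularity is the central one. When it does, the junctions meet at the origin in a quadruple point with the standard $\pi/3,2\pi/3$ angles, and the restart through the unique symmetric expander of \cite{Mazzeo-Saez-2011} (via \cite{Ilmanen-Neves-Schulze-Shortexistence-2019, LMPS-2021}) keeps the flow symmetric, realising the transitions tree $\to$ tree and $\th$-network $\leftrightarrow$ eyeglasses.

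The heart of the matter is to exclude accumulation of these type-0 times, and here I would apply Proposition \ref{prp:intersecs}. The idea is to encode the symmetric evolution through a scalar function $u(x,t)$ on $[0,1]\times[0,T]$ solving a linear parabolic equation whose zeros count geometric crossings; the natural choice is the difference between the evolving (upper) arc and a fixed comparison solution, such as a straight line or a static minimal network, since the difference of two solutions of the curvature flow satisfies a linear parabolic equation. One arranges the comparison so that $u$ does not vanish at the lateral boundary $x=0,1$. Proposition \ref{prp:intersecs} then makes the number $Z(t)$ of zeros of $u(\cdot,t)$ finite, non-increasing in $t$, and strictly decreasing at every multiple zero. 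I would then show that each type-0 collision registers as a multiple zero: as the central edge pinches, the evolving arc becomes tangent to the reference, so $Z$ drops by at least one across every type-0 time. Since $Z(0)<\infty$ and $Z$ is integer-valued, only finitely many drops can occur, so the type-0 times are finite in number and cannot accumulate.

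With accumulation excluded, I would finish by describing the last smooth piece. In every case enclosing a region --- the $\th$-network, the eyeglasses, and the lens already treated in \cite{Bellettini-Novaga-Lens2011, Schnuerer-etal-Lens2011} --- each bounded region has area decaying at a constant negative rate: by Gauss--Bonnet and the Herring $120^\circ$ condition, $\frac{d}{dt}|R_i| = -2\pi + (\text{sum of exterior angles}) < 0$, which stays constant while the topology is fixed. Hence the regions vanish in finite time; after the last type-0 singularity the $\th$-network has become eyeglasses, whose loops shrink with bounded length, so the collapse is not the vanishing of the central edge and cannot be type-0, forcing the curvature to blow up and giving $T<\infty$. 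For the tree there is no enclosed region: on the final piece the intersection bound rules out any further length degeneration, the curvature stays bounded, and the flow exists for all $t$. A subconvergence along the gradient-flow structure (as in \cite{MNT-NetsI-2004}) then produces a critical symmetric network, which by the symmetry and the tree topology is either a standard cross, when the two junctions have merged, or the Steiner tree on the four fixed endpoints.

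The step I expect to be the main obstacle is the second one: realising the symmetric network flow as a scalar parabolic quantity to which Proposition \ref{prp:intersecs} genuinely applies in the singular setting. One must build the comparison so that the lateral boundary stays non-degenerate, verify that every central-edge collapse is a multiple zero rather than a transverse crossing, and --- most delicately --- ensure that $Z(t)$ remains monotone \emph{through} the collision and the re-emergence of edges from the expander; this is exactly the singular adaptation of Angenent's theorem encoded in Proposition \ref{prp:intersecs}. A lesser difficulty lies on the tree side: upgrading subconvergence to convergence to the specific limit requires a \L{}ojasiewicz--Simon inequality (cf. \cite{Pluda-Pozzetta-Lojasiewicz2023}), and it is precisely here that a singularity at infinity, as in the Remark, cannot be ruled out.
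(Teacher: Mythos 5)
Your proposal does not prove the statement it is aimed at. The statement is the paper's \emph{Conjecture}, which concerns a \emph{general} tree: arbitrarily many triple junctions, no symmetry hypothesis, and the possible disappearance of boundary curves. The paper offers no proof of this conjecture --- it is stated as open --- and what it does prove (Theorem \ref{thm:main}, by essentially the argument you outline) covers only networks that are symmetric with respect to two perpendicular axes and have exactly two triple junctions. Every load-bearing step of your proposal silently reinstates those two hypotheses. Your first step (reflection plus geometric uniqueness forces the junctions onto an axis, so only the central edge can vanish) is vacuous for a general tree: there is no symmetry to propagate, a type-0 singularity can be the simultaneous collapse of several internal edges anywhere in the network, and the resulting junction can have order greater than four, in which case the expanders of Mazzeo--Saez are not unique and even uniqueness of the continued flow fails (cf.\ \cite{LMPS-2021}). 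Your terminal analysis likewise only treats the two-junction topologies (tree/cross, $\theta$-network, eyeglasses) with four fixed endpoints, and it never addresses the conjecture's actual hypothesis about boundary curves disappearing.

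The decisive obstruction is your second step. Proposition \ref{prp:intersecs} applies to two solutions of the curve shortening flow whose endpoints avoid the other curve; in a general network each edge solves the flow only between triple junctions, which move in time and are shared by three edges, so no scalar quantity $u(x,t)$ with non-degenerate lateral boundary is available. This is precisely what the paper flags: ``the presence of triple junctions is what makes it difficult to apply Proposition \ref{prp:intersecs} to a general, non-symmetric network.'' Note also that even in the symmetric case one cannot simply assert monotonicity of $Z(t)$ ``through'' the collision, as you propose: at the singular time the junction \emph{hits} the comparison line, violating the lateral boundary hypothesis of the Sturmian theorem, and the paper establishes the drop by a separate bespoke argument (local graph representation, the Ilmanen--Neves--Schulze lower bound $w(0,t) \geq c\sqrt{t-T}$, and the maximum principle), which relies on the uniqueness and symmetry of the expander flowing out of a standard cross; for junctions of order at least five, or non-symmetric collisions, no analogue of this step is known. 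Finally, your implicit assumption that every singularity of a tree is type-0 (curvature bounded) is, for more than two junctions, known only conditionally on the multiplicity-one conjecture (\cite{MNP-Type0trees2022}); unconditional results exist only for at most two junctions (\cite{MNP-2juncs-2017}). In short, what you have written is at best a sketch of the paper's proof of Theorem \ref{thm:main}; the Conjecture itself remains open, and your method does not extend to it.
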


The plan of the paper is the following: in Section \ref{sec:setup} we introduce the notion of network and network flow, and we recall some preliminary results on existence and uniqueness of solutions. 
In Section \ref{sec:proof} we prove our main result on the singularities of the flow of symmetric networks with two triple junctions.
Finally, in Section \ref{sec:extension} we extend the result to the flow of symmetric networks on the 2-sphere.


\section{Notation and preliminary results}\label{sec:setup}

Before proceeding to the proof of Theorem \ref{thm:main}, we shall first establish our notation, present the necessary definitions, and recapitulate the essential results.

Let $\g: [0,1] \to \R^2$ be a regular $C^2$ curve, meaning that $\g'(x) \neq 0$ for all $x \in [0,1]$. We denote its unit tangent as $\t(x) := \g'(x)/|\g'(x)|$ and its unit normal as $\n$, such that $\{ \t, \n \}$ is a positive basis of $\R^2$.
The curvature of $\g$ with respect to $\n$ is denoted as $\k$.
Occasionally, we may use superscript indices to label curves, and when we do, we will also label their tangents, normals, and curvatures accordingly.

\begin{defn}[Network] \label{def:net}
A \textit{network} $\G$ is a finite union of embedded, regular curves $\{ \g^j \}_{j=1}^n$ of class $C^2$, called \textit{edges}, that meet only at their endpoints and nontangentially, and such that the union of their images $\bigcup_{j=1}^n \g^j([0,1])$ is a connected set.
A network $\G$ is said to be \textit{regular} when its edges intersect solely at triple junctions, at which their interior tangents form equal angles of $2\pi/3$.
The endpoints of curves that are not shared by other curves are referred to as \textit{endpoints} of the network.
\end{defn}

Note that the regularity condition at triple junctions can be stated as follows: if three curves $\g^{j_k}$ ($k = 1,2,3$) intersect at, say, $x=0$, then
\[
\t^{j_1}(0) + \t^{j_2}(0) + \t^{j_3}(0) = 0 .
\]

\begin{defn}[Network flow]\label{def:netflow}
Let $\G(t) = \{ \g^j(\cdot,t) \}_{j=1}^n$, with $t \in (a,b)$, be one-parameter family of regular networks, with fixed endpoints $p^1, \ldots, p^r$, and time-dependent triple junctions $o^1(t), \ldots, o^s(t)$.
Then $\{ \G(t) \}_{a < t < b}$ is said to be a solution to the \textit{network flow} if at every time $t \in (a,b)$, with possible curve relabelling, the following system is satisfied:
\begin{equation}\label{eq:netflow}
\left\{ \begin{alignedat}{3}
\langle \p_t\g^j(x,t) , \n^j(x,t) \rangle & = \k^j(x,t) , &\quad &x \in [0,1] , \quad j = 1, \ldots, n , \\
\g^k(1,t) & = p^k , &\quad &k = 1, \ldots, r ,\\
\t^{l_1} + \t^{l_2} + \t^{l_3} & = 0 &\quad &\text{at the triple junction } o^l(t) , \quad l = 1, \ldots, s .
\end{alignedat}\right.
\end{equation}
\end{defn}

We now state a version of short time existence for the network flow that suits our needs.

\begin{prp}[Cf. \cite{Goesswein-Menzel-Pluda-Existence2023}*{Thms. 1.1--2}]\label{prp:reg-short-time}
Let $\G_0$ be a regular network.
Then there exists a smooth solution $\{ \G(t) \}_{0 \leq t < T}$, unique up to reparametrisations, to the network flow \eqref{eq:netflow} starting at $\G_0$ and fixing its endpoints.
Moreover, the flow can be extended to a maximal time $T>0$, at which at least one of the following scenarios unfolds:
\begin{itemize}
    \item $T = \infty$;
    \item the inferior limit as $t \uparrow T$ of the length of one of the edges of $\G(t)$ is zero;
    \item the superior limit as $t \uparrow T$ of the $L^2$-norm of the curvature of $\G(t)$ is infinite.
\end{itemize}
\end{prp}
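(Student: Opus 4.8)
The plan is to recast the geometric system \eqref{eq:netflow} as a strictly parabolic boundary value problem, solve it for a short time by a contraction argument, and then continue the solution up to a maximal time while extracting the stated blow-up dichotomy. Since the statement is quoted from \cite{Goesswein-Menzel-Pluda-Existence2023}, I only sketch the structure of such a proof, following the classical scheme of Bronsard--Reitich \cite{Bronsard-Reitich-1993} and Mantegazza--Novaga--Tortorelli \cite{MNT-NetsI-2004}.

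The first issue is that \eqref{eq:netflow} is only \emph{weakly} parabolic: it prescribes merely the normal component $\langle \p_t\g^j,\n^j\rangle=\k^j$ of the velocity, so the tangential motion is undetermined. This is precisely the reparametrisation invariance of the flow and is the source of the phrase ``unique up to reparametrisations.'' To obtain a well-posed PDE I would fix a gauge, replacing each equation by the \emph{special flow} $\p_t\g^j=\p_x^2\g^j/|\p_x\g^j|^2$, whose normal part is still $\k^j\n^j$; this turns every edge into a strictly parabolic quasilinear equation without altering the moving image. A solution of the gauged system gives, after a tangential reparametrisation, a solution of \eqref{eq:netflow}, and conversely, which is what yields geometric uniqueness.

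Next I would set up the boundary conditions. At the fixed endpoints these are the Dirichlet conditions $\g^k(1,t)=p^k$. At a triple junction $o^l(t)$ where three ends, say at $x=0$, meet, they are the concurrency conditions $\g^{l_1}(0,t)=\g^{l_2}(0,t)=\g^{l_3}(0,t)$ together with the Herring condition $\t^{l_1}+\t^{l_2}+\t^{l_3}=0$; a dimension count shows these supply exactly the $6$ scalar conditions required by a second-order system of three planar curves meeting at the junction. Linearising the gauged system and these conditions about $\G_0$, the analytic heart of the argument is to verify that the resulting linear problem is parabolic in the sense of Solonnikov, i.e. that the \emph{Lopatinskii--Shapiro (complementing) condition} holds at every boundary point, the triple junctions included; one must also check the compatibility conditions at the space--time corners $\{t=0\}$, which are guaranteed by the regularity of $\G_0$. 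With these in hand, maximal $L^p$- (or parabolic Hölder) regularity for the linearised problem is available, and a standard contraction mapping on a short time interval produces a unique solution; parabolic bootstrapping then upgrades it to a smooth flow.

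Finally, I would pass from local to maximal existence by iterating the short-time result, obtaining the maximal $T>0$. To identify the alternatives at a finite $T$, I would argue by contraposition: if along $[0,T)$ the lengths of all edges stay bounded below and $\|\k\|_{L^2}$ stays bounded, then integral estimates for $\int|\p_s^m\k|^2$ combined with interpolation control all higher derivatives uniformly, so $\G(t)$ converges as $t\uparrow T$ to a regular network to which the short-time theorem applies again, contradicting maximality. Hence at a finite maximal time either some edge length degenerates to zero (in $\liminf$) or $\limsup\|\k\|_{L^2}=\infty$, which are exactly the listed scenarios. The main obstacle is the verification of the Lopatinskii--Shapiro condition at the triple junctions: the angle condition couples the three edge-ends in a non-diagonal way, so the complementing condition is not automatic and requires an explicit computation of the relevant boundary symbol.
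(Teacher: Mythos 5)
The paper itself gives no proof of this proposition---it is imported wholesale as a known result, with the citation to \cite{Goesswein-Menzel-Pluda-Existence2023} standing in for the argument---so there is no internal proof to compare against. Your sketch reproduces the standard strategy of that reference and its predecessors \cites{Bronsard-Reitich-1993, MNT-NetsI-2004} (gauge-fixing by the special flow, verification of the Lopatinskii--Shapiro condition at the triple junctions, contraction mapping plus bootstrapping, and continuation via integral estimates for $\int |\p_s^m \k|^2$), and it is structurally sound, including the correct count of six scalar boundary conditions per junction and the correct identification of geometric uniqueness and of the complementing condition as the genuinely delicate points.
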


Proposition \ref{prp:reg-short-time} allows us to initiate the flow from any regular data and extend it to a maximal time while ensuring the flow remains smooth.
Nevertheless, in the specific case we address here, where the initial network possesses only two triple junctions, we can provide additional insights.

\begin{prp}[Cf. \cite{MNP-2juncs-2017}*{Thm. 1.1}]\label{prp:2juncs}
Let $\G_0$ be a regular network with exactly two triple junctions, and let $\{ \G(t) \}_{0 \leq t < T}$ be the maximal smooth flow starting at $\G_0$.
Suppose also that $T$ is finite and the length of no boundary edge goes to zero.
Then, as $t \uparrow T$, one of the following occurs:
\begin{itemize}
    \item the length of a curve joining the triple junctions goes to zero while the curvature remains bounded;
    \item the limit of the lengths of the curves composing a loop goes to zero and the $L^2$-norm of the curvature goes to infinity.
\end{itemize}
If the network $\G_0$ is a tree, then only the first situation happens.
\end{prp}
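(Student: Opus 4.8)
The plan is to read off the type of singularity forming at the maximal time $T$ from a blow-up analysis, exploiting the multiplicity-one property that is available precisely because $\G_0$ has at most two triple junctions.

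First I would invoke Proposition~\ref{prp:reg-short-time}: since $T<\infty$, as $t\uparrow T$ either the length of some edge tends to zero or the $L^2$-norm of the curvature becomes unbounded, and possibly both. By hypothesis no boundary edge vanishes, so a collapsing edge can only be one joining the two triple junctions or one belonging to a loop. The aim is then to show that these two geometric possibilities are forced to pair up with the two analytic alternatives, bounded versus blowing-up curvature, in exactly the manner claimed.

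The main engine is Huisken's monotonicity formula adapted to the network flow. Around a candidate singular point $(x_0,T)$ I would perform the parabolic rescalings and use the uniform Gaussian density bound coming from monotonicity to extract, along a subsequence, a self-similar shrinking network solving $\k=\langle x,\n\rangle$. Since a rescaling is a diffeomorphism and junctions can only merge in the limit, the limit shrinker still has at most two triple junctions; hence, by the multiplicity-one property, it is embedded with multiplicity one. I would then classify such shrinkers. The flat ones---a line through the origin, or the standard triod---carry no curvature concentration and, via a local regularity theorem, correspond to a regular point or to a triple junction that simply persists; the only non-flat embedded shrinkers compatible with two triple junctions are the closed, loop-type ones.

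With this classification the dichotomy follows. If the curvature stays bounded, the blow-up is flat everywhere, so no loop can collapse: a loop of length $L\to 0$ enclosing positive area would have average curvature of order $1/L$ and blow up, a contradiction. Thus reaching $T<\infty$ with bounded curvature forces the collapse of a curve joining the two triple junctions, the first alternative. If instead the curvature blows up, the blow-up limit is a non-flat, loop-type shrinker, which is precisely the signature of a vanishing loop, the second alternative. Finally, a tree has no loop, so only the type-0 collapse of the connecting edge is possible. I expect the classification of the embedded multiplicity-one shrinkers with at most two triple junctions, together with the compactness needed to pass to the rescaled limit, to be the main obstacle: ruling out higher-multiplicity or exotic self-similar limits is exactly the step where the two-junction hypothesis is indispensable, and it is what turns the soft monotonicity argument into the sharp dichotomy above.
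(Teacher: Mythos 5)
First, a point of fact: the paper does not prove this proposition at all. It is imported as a black box from Mantegazza--Novaga--Pluda \cite{MNP-2juncs-2017}*{Thm.\ 1.1} (hence the ``Cf.'' in its statement) and is then used as an ingredient in the proof of Theorem \ref{thm:main}. So there is no proof in this paper to compare yours against; what can be compared is your sketch versus the argument in the cited reference, and at the level of strategy your sketch does reconstruct it: Huisken-type monotonicity, parabolic rescaling around the singular point, multiplicity one, and a classification of the possible self-similar blow-up limits is indeed how \cite{MNP-2juncs-2017} proceeds.

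As a proof, however, your proposal is essentially a reduction of the statement to its two hardest ingredients, which you assume rather than establish. (i) The multiplicity-one property for blow-ups of flows with at most two triple junctions is not simply ``available'': it is one of the main theorems of \cite{MNP-2juncs-2017} itself, so invoking it as a known input when the goal is to prove Theorem 1.1 of that same paper is close to circular. (ii) The classification of embedded, multiplicity-one shrinking networks with at most two triple junctions (the line, the standard triod, the circle, the Brakke spoon, and the lens/fish-type shrinkers) is likewise a substantive result that your sketch dispatches with ``I would then classify such shrinkers.'' Beyond these, two steps need more than you give them. In the bounded-curvature case, boundedness of $\k$ alone does not produce a collapsing edge; you must quote Proposition \ref{prp:reg-short-time} to know that at a finite maximal time some edge length tends to zero, and then exclude boundary edges (hypothesis) and loops (see below) to conclude that the connecting edge collapses. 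In the blow-up case, you must transfer the loop present in the self-similar limit back to an actual loop of $\G(t)$ whose lengths tend to zero, which requires the $C^1_{\loc}$, multiplicity-one convergence of the rescaled networks together with the parabolic scaling of lengths by $\sqrt{T-t}$; a loop in a limit does not automatically certify a collapsing loop in the flow. Finally, your justification that a collapsing loop forces curvature blow-up (``positive enclosed area'') is not the right mechanism: the correct argument is Gauss--Bonnet plus Cauchy--Schwarz. Along a simple loop with $m \leq 2$ triple junctions the turning angles are $\pi/3$, so
\[
\int_{\text{loop}} \k \, ds = 2\pi - m\,\frac{\pi}{3} \geq \frac{4\pi}{3},
\qquad\text{hence}\qquad
\int_{\text{loop}} \k^2 \, ds \geq \frac{(4\pi/3)^2}{L} \longrightarrow \infty
\]
as the total length $L$ of the loop tends to zero, which is what pairs the vanishing-loop alternative with curvature blow-up.
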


As explained in the Introduction, when the first scenario in Proposition \ref{prp:2juncs} occurs, the flow approaches an irregular network with a single quadruple junction, resulting in the development of a type-0 singularity.
Nonetheless, a transition to a regular flow is made possible by the following proposition.

\begin{prp}[Cf. \cite{LMPS-2021}*{Thm. 1.1, Prp. 8.5}]\label{prp:irreg-short-time}
Let $\G_0$ be an irregular network.
Then there exists a solution $\{ \G(t) \}_{0 \leq t < T}$ to the network flow \eqref{eq:netflow} such that $\G(t)$ converges in the Hausdorff distance to $\G_0$ as $t \downarrow 0$.
Furthermore, all the solutions, accounting for possible reparametrizations, can be classified by the self-similar, tree-like expanding solitons described in \cite{Mazzeo-Saez-2011} at each irregular junction.
In particular, if $\G_0$ consists of a single quadruple junction with angles $\pi/3$ and $2\pi/3$, and no other junctions are present, then the flow admits a unique solution.
\end{prp}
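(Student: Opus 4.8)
The plan is to follow the approach of \cite{Ilmanen-Neves-Schulze-Shortexistence-2019} and \cite{LMPS-2021}: resolve each irregular junction of $\G_0$ by a self-similar expanding soliton, and then read off the classification of the flows from the choice of expander at each junction. The starting observation is that the system \eqref{eq:netflow} is invariant under the parabolic rescaling $\G(t)\mapsto\l^{-1}\G(\l^2 t)$, so it is natural to look for \emph{expanders}, that is, networks $\Sigma$ whose flow is $\{\sqrt{t}\,\Sigma\}_{t>0}$. A short computation shows that such a $\Sigma$ solves the elliptic system $\k=\tfrac12\langle x,\n\rangle$ on each edge, together with the Herring condition at each triple junction. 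As $t\downarrow 0$ the network $\sqrt{t}\,\Sigma$ collapses onto the cone formed by the half-lines asymptotic to the ends of $\Sigma$; conversely, at each irregular junction of $\G_0$ the emanating half-lines form precisely such a cone. Hence an expander asymptotic to that cone is a \emph{regular} network that opens up the junction while reproducing the tangent cone of $\G_0$ there.

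The second step is to feed in the classification of \cite{Mazzeo-Saez-2011}: for every junction cone there exists at least one tree-like expanding soliton asymptotic to it, and the family of all such expanders is described explicitly. Using the rescaled expander $\sqrt{t}\,\Sigma$ as a local model in a parabolic neighbourhood of each junction, and the smooth edges of $\G_0$ away from the junctions, one assembles an approximate solution by interpolating in the overlap regions. Solving \eqref{eq:netflow} exactly then becomes a fixed-point problem for the error, set in parabolic Hölder spaces weighted to account for the non-compact ends of the expanders and for the junction conditions. The asymptotic matching of $\Sigma$ to its cone, together with the agreement of the expander ends with the edges of $\G_0$, yields the Hausdorff convergence $\G(t)\to\G_0$ as $t\downarrow 0$. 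Since the only freedom in the local models is the choice of expander at each junction, the solutions modulo reparametrisation are in bijection with tuples of Mazzeo--Saez expanders, which is the asserted classification. The last claim is then immediate: by \cite{Mazzeo-Saez-2011}*{Prp. 2.2} the cone of a standard quadruple junction, with opposite angles $\pi/3$ and $2\pi/3$, carries a \emph{unique} asymptotic expander, so in that case the flow is unique.

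The hard part will be the analytic core of the second step. The operator obtained by linearising the flow around an expander must combine the exponentially growing geometry of the non-compact ends with the linearised Herring conditions at the triple junctions, and its Fredholm theory is delicate: one has to choose the weights so that this operator is invertible, or at worst has a controlled finite-dimensional cokernel matched to the expander moduli, and simultaneously so that the quadratic remainder is a contraction on a small ball uniformly as $t\downarrow 0$. Establishing this mapping behaviour---the injectivity and surjectivity of the linearisation in the weighted spaces---is where the substantive work lies; by contrast, the combinatorial bijection with the expanders and the uniqueness for the standard cross are essentially formal once the construction has been carried out.
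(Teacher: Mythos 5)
First, a point of comparison: the paper offers no proof of this proposition at all --- it is imported as a black box from \cite{LMPS-2021}*{Thm. 1.1, Prp. 8.5} together with \cite{Mazzeo-Saez-2011}, and nothing in Section \ref{sec:proof} depends on how those results are established. So the only meaningful comparison is with the strategy of the cited works, which your sketch does follow for the existence part: resolve each irregular junction by a rescaled self-similar expander $\sqrt{t}\,\Sigma$, glue to the smooth edges of $\G_0$ away from the junctions, and close the argument by a fixed-point scheme in weighted parabolic spaces. As you yourself concede, the analytic core of that scheme (mapping properties of the linearisation on the non-compact expander, uniformity of the contraction as $t \downarrow 0$) is named but not carried out, so what you have is a plan for the existence half rather than a proof of it.

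The genuine gap, however, is conceptual and concerns the second half of the statement. You assert that ``the combinatorial bijection with the expanders and the uniqueness for the standard cross are essentially formal once the construction has been carried out.'' They are not. The gluing construction is purely an existence device: it produces \emph{one} solution for each admissible tuple of Mazzeo--Saez expanders, hence only a lower bound on the set of solutions. The proposition classifies \emph{all} solutions attaining $\G_0$ in the Hausdorff sense, and in the quadruple-junction case asserts that the solution is \emph{unique}. For this one needs the converse statement: every solution with initial datum $\G_0$ has, at each irregular junction, a parabolic blow-up at the initial time which is a multiplicity-one self-similar expander, and this expander determines the solution near the junction. That converse is the bulk of the work in \cite{LMPS-2021} (it is precisely what Prp. 8.5 and Cor. 8.7 there encode), and it rests on tools absent from your sketch --- monotonicity-type formulas, local regularity, and a uniqueness-of-tangent-flow argument. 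In particular the final claim is not ``immediate'' from \cite{Mazzeo-Saez-2011}*{Prp. 2.2}: uniqueness of the expander asymptotic to the standard cross only tells you the gluing construction yields a single flow; without the converse, there could a priori exist solutions flowing out of the cross that are not modeled on any expander, and the uniqueness assertion would fail to follow.
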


While the convergence to the initial datum $\G_0$ can be understood in a much stronger sense, as discussed in \cite{LMPS-2021}, for our purposes, local uniform convergence suffices.
In any case, it is worth noting that convergence remains smooth away from the irregular junctions, provided that $\G_0$ itself is smooth.

Proposition \ref{prp:irreg-short-time} can be regarded as a restarting theorem after the formation of an irregular network, as previously explained.
Furthermore, since the flow remains regular for positive times, we can employ Proposition \ref{prp:reg-short-time} to extend it until the next singularity, if there is one.
This leads us to the following definition, which is the primary focus of our discussion here.

\begin{defn}[Extended Network Flow]\label{def:extnetflow}
An \textit{extended network flow} with initial condition $\G_0$ is a one-parameter family of networks $\{ \G(t) \}_{0 \leq t < T}$ that satisfies the following conditions:
\begin{itemize}
    \item there exists a finite number of times $0 = t_0 < t_1 < \cdots < t_m = T$, such that the restriction $\{ \G(t) \}_{t_{k} < t <t_{k+1}}$, $k = 0, \ldots, m-1$, is a regular network flow in the sense of Definition \ref{def:netflow};
    \item at each $t_k$, $k = 1, \ldots, m-1$, a type-0 singularity forms, and we call these \textit{singular times};
    \item $\G(t)$ converges to $\G(t_k)$ in the Hausdorff distance as $t \downarrow t_k$.
\end{itemize}
\end{defn}

Thus, another way then to rephrase Theorem \ref{thm:main} is that every solution to the network flow in the sense of Definition \ref{def:extnetflow} can be continued to a maximal extended flow such that either $T = \infty$, or else $T < \infty$ and the curvature increases without bound as $t \uparrow T$.

We now present the main tool used for the proof of Theorem \ref{thm:main}.

\begin{prp}[Cf. \cite{AngParabolicII91}*{Thm. 1.3}]\label{prp:intersecs}
Let $\g^1, \g^2: [0,1] \times [0,T) \to \R^2$ be two solutions to the curve shortening flow, and suppose that for each $(x,t) \in [0,1] \times [0,T)$
\[
\g^1(0,t) , \g^1(1,t) \neq \g^2(x,t) \quad \text{and} \quad \g^2(0,t) , \g^2(1,t) \neq \g^1(x,t) .
\]
Then the number of intersections of $\g^1(\cdot,t)$ and $\g^2(\cdot,t)$,
\[
i(t) := \#\{ (x_1,x_2) \in [0,1] \times [0,1] : \g^1(x_1,t) = \g^2(x_2,t) \} ,
\]
is finite for every $t \in (0,T)$.
Moreover, $i(\cdot)$ is a nonincreasing function of $t$, and decreases exactly when $\g^1$ and $\g^2$ become tangent at some point.
\end{prp}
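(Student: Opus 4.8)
The plan is to reduce the statement about \emph{intersections of two planar curves} to a statement about \emph{zeroes of a single scalar function}, and then invoke the Sturmian theorem as quoted in the introduction. This is the classical Angenent strategy, so the main work is in setting it up correctly rather than in any novel computation.

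First I would parametrise both curves by the same geometric time and, locally near an intersection point, write one curve as a graph over the other. More precisely, fix a point $(\bar x_1, \bar x_2, \bar t)$ with $\g^1(\bar x_1, \bar t) = \g^2(\bar x_2, \bar t) =: p$, and choose coordinates so that the common tangent line at $p$ is the $x$-axis. Since both curves are regular and meet the $x$-axis transversally to the normal, in a space-time neighbourhood of this point each curve $\g^i$ can be written as a graph $y = f^i(x,t)$, where $x$ ranges over a small interval and $f^i$ is smooth. The curve shortening flow, written for graphs, becomes the quasilinear parabolic equation
\[
\p_t f^i = \frac{\p_x^2 f^i}{1 + (\p_x f^i)^2}.
\]
The difference $u := f^1 - f^2$ then satisfies a \emph{linear} parabolic equation with smooth coefficients (obtained by subtracting the two equations and using that the nonlinearity is $C^\infty$ in $\p_x f$, so the difference of the two nonlinear terms is a smooth coefficient times $\p_x^2 u$ plus a smooth coefficient times $\p_x u$). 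Zeroes of $u(\cdot,t)$ correspond exactly to intersection points of the two curves in this chart, and a \emph{multiple} zero of $u$ corresponds to a \emph{tangency} of $\g^1$ and $\g^2$. The hypothesis that the endpoints of each curve never lie on the other curve is precisely what guarantees that $u$ does not vanish on the lateral boundary of the relevant space-time rectangle, which is the boundary condition required by the Sturmian theorem (\cite{AngSing91}*{Thm. 2.1}).

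Next I would upgrade this local picture to a global count. The finiteness of $i(t)$ for each fixed $t \in (0,T)$ is where I expect the main obstacle to lie: a priori the intersection set need not be discrete, and one must rule out both accumulation of intersection points in the interior and pathological behaviour as the parameter approaches the endpoints. The interior accumulation is handled by the local graph reduction above together with the Sturmian conclusion that zeroes of a nontrivial solution of a linear parabolic equation are isolated for $t>0$; real-analyticity of solutions in the spatial variable (a consequence of parabolic smoothing) prevents $u(\cdot,t)$ from vanishing on a set with an accumulation point unless it vanishes identically, and identical vanishing is excluded by the endpoint hypothesis. Near the endpoints one uses the strict separation $\g^1(0,t),\g^1(1,t)\neq\g^2(x,t)$ (and symmetrically) to find a uniform neighbourhood of the boundary that is intersection-free, so the count reduces to a compact interior region where the finitely-many local charts cover all intersections. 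Patching finitely many local zero-counts into a single global integer $i(t)$ then gives finiteness.

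Finally, monotonicity and strict decrease follow from the corresponding properties in the Sturmian theorem. The number of zeroes of $u(\cdot,t)$ is nonincreasing in $t$ and drops strictly whenever $u(\cdot,t)$ develops a multiple zero; translating back through the graph correspondence, $i(t)$ is nonincreasing and decreases exactly when a tangency occurs between $\g^1$ and $\g^2$. The one point requiring care is that the global count $i(t)$ agrees with the sum of the local zero-counts without double-counting or missing transitions as intersection points merge or appear at the edges of charts; here the transversality built into the graph reduction and the endpoint hypothesis ensure that no intersection can silently enter or leave through a chart boundary, so the global monotonicity is inherited cleanly from the local one.
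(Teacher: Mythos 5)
The paper itself contains no proof of this proposition---it is imported verbatim (``Cf.'') from Angenent \cite{AngParabolicII91}*{Thm.~1.3}, with the introduction only indicating that its proof rests on the Sturmian theorem---and your sketch reconstructs exactly that argument: represent both evolving curves locally as graphs over a common line, note that the difference of the two graph functions solves a linear parabolic equation, and apply the Sturmian theorem, with the endpoint hypothesis ruling out zeroes on the lateral boundary and identical vanishing; so this is essentially the same approach, and it is correct. The only wording to fix is ``common tangent line'': at a transversal crossing no such line exists, and what you actually need (and can always get) is any line making an angle strictly less than $\pi/2$ with both tangent directions at the intersection, over which both curves are locally graphs.
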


We briefly remark that the presence of triple junctions is what makes it difficult to apply Proposition \ref{prp:intersecs} to a general, non-symmetric network.


\section{Proof of Theorem \ref{thm:main}}\label{sec:proof}

We are now ready for the proof of our main result.
Let $\G_0$ be a symmetric regular network with two triple junctions, and let $\{ \G(t) \}_{0 \leq t < T}$ be the evolution given by Proposition \ref{prp:reg-short-time}.
By our symmetry assumptions, from the system \eqref{eq:netflow} it follows that at each time $t \in (0,T)$ the network $\G(t)$ is also symmetric with the same axes of symmetry as $\G_0$.
Therefore, modulo a rotation and translation, the flow is completely described by one single curve in the first quadrant of $\R^2$, where the coordinate axes coincide with the symmetry axes of $\G(t)$.
Let $\g: [0,1] \times [0,T) \to \R^2$ be the evolution of this defining curve, and call $(x_1,x_2)$ the rectangular coordinates of $\R^2$.
After a reparametrisation, we may further suppose that $\g(0,t)$ is the triple junction, which lies in the $x_1$-axis, and $\t(0,t) = ( \frac{1}{2}, \frac{\sqrt{3}}{2} )$ is the unit tangent, which is constant.
Thus, we have the following boundary conditions for the curve evolution of $\g$, according to each case, if $\G_0$:
\begin{itemize}
    \item is a tree, then $\g(1,t) = p$ is a fixed point;
    \item is a $\th$-network, then $\g(1,t)$ is a free point in the $x_2$-axis such that $\t(1,t) = (-1,0)$;
    \item is eyeglasses, then $\g(1,t)$ is a free point in the $x_1$-axis such that $\t(1,t) = (0,-1)$.
\end{itemize}
As a consequence of Proposition \ref{prp:2juncs}, the flow develops a type-0 singularity at $T$ if and only if $\lim_{t \uparrow T} \g(0,t) = (0,0)$.

\begin{proof}[Proof of Theorem \ref{thm:main}]
To fix ideas, let us suppose that the initial network $\G_0$ is a tree with fixed endpoints.
Suppose also that at a finite time $T$ the flow develops a type-0 singularity.
We can then apply Proposition \ref{prp:irreg-short-time} and extend the flow a little further to a time $\widehat{T} > T$.
By symmetry, this can be viewed as extending the evolution of the defining curve to a map $\g: [0,1] \times [0, \widehat{T}) \to \R^2$, where the triple junction $\g(0,t)$ is in the $x_2$-axis for times $t \in (T,\widehat{T})$.

Now, consider a straight line $\ell$ through the origin, such that the endpoint $p \notin \ell$ and the angle between the $x_1$-axis and $\ell$ is in the range $( \frac{\pi}{6} , \frac{\pi}{3} )$.
Note also that $\ell$ is a static solution to the curve shortening flow.
Define the function
\[
i(t) := \# \{ x \in [0,1]: \g(x,t) \in \ell \} , \quad t \in (0, \widehat{T}).
\]
We will show that $i$ is finite and nonincreasing in time, and decreases strictly across $t = T$.

Indeed, as long as the vertices of the network do not collide, i.e. as $\g(0,t)$ stays away from the origin, we can invoke Proposition \ref{prp:intersecs} to see that $i(t)$ is not increasing in time.
On the other hand, $\g(\cdot,t)$ converges smoothly to a curve $\g(\cdot,T)$ as $t \uparrow T$, such that $\g(0,T) = 0$, and $\t(0,T) =  ( \frac{1}{2} , \frac{\sqrt{3}}{2} )$.
Therefore, there exist some small $\e, \d > 0$ such that $\g(\cdot,t)$ crosses $\ell$ in the rectangle $[0,\e] \times [0,2\e]$ at a single point, for every $t \in (T - \d, T]$.
Note that $\g(\cdot,T)$ intersects $\ell$ exactly at the origin, and besides this point it lies completely above $\ell$.
We will now show that, after we restart the flow, $\g(\cdot,t)$ remains at a positive distance above $\ell$ in $[0,\e] \times [0,2\e]$ for a short time.

Because of the invariance under reparametrisations, we can locally represent the evolution of $\g$ as a graph $u(x,t)$ over the $x_1$-axis, for $(x,t) \in [0,\e] \times [T, T+\d)$, with $\d$ possibly smaller.
The function $u: [0,\e] \times [T,T+\d) \to \R$ then satisfies the partial differential equation
\[
u_t = \frac{u_{xx}}{1+(u_x)^2} \quad \text{in} \quad [0,\e] \times (T,T+\d)
\]
with Cauchy-Neumann boundary conditions
\[
u(x,T) = u_T(x) , \quad u_x(0,t) = 1/\sqrt{3} , \quad (x,t) \in [0,\e] \times (T, T+\d) ,
\]
where $u_T: [0,\e] \to \R$ is a function parametrising $\g(\cdot,T)$.
If we consider the function $w(x,t) := u(x,t) - mx$, with $m \in ( \frac{1}{\sqrt{3}} , \sqrt{3} )$ being the tangent of the angle $\ell$ forms with the $x_1$-axis, then $w$ solves
\[
w_t = \frac{w_{xx}}{1+(w_x+m)^2} ,
\]
which is strictly parabolic.
Thanks to the estimate on the shortest curve of the flow with singular initial data \cite{Ilmanen-Neves-Schulze-Shortexistence-2019}*{Thm. 1.1}, there is a positive constant $c$ such that
\[
w(0,t) \geq c\sqrt{t-T} .
\]
Furthermore, Proposition \ref{prp:irreg-short-time} implies that as $t$ approaches $T$ from above, $u(\cdot, t)$ uniformly converges to $u_T$.
Therefore, for sufficiently small $\d$, we have $w(\e, t) > 0$ for all $t \in [T, T+\d)$.
This, combined with the fact that $w(x,T) \geq 0$ for all $x \in [0, \e]$ and the application of the maximum principle, shows that $w$ remains greater than zero in $[0, \e] \times [T, T+\d)$, which means $\g(\cdot, t)$ remains above the line $\ell$ during this interval.
Hence, in a neighbourhood of the origin, the number of intersections between $\g(\cdot, t)$ and $\ell$ decreases by precisely one as $t$ crosses $T$.
Outside this neighbourhood, we can once again employ Proposition \ref{prp:intersecs} for $t$ in the range $(T, T+\d)$.
This analysis demonstrates that $i(t)$ decreases by at least one over the interval $(0, T+\d)$.
For illustration purposes, see Figure \ref{fig:i-decrease-sing}.

\begin{figure}
\centering

\begin{tikzpicture}[x=1cm,y=1cm,scale=1.4]

\draw[line width=.5pt, smooth, shift={(-4 cm, 0 cm)}] (0,0) -- (2,0);
\draw[line width=.5pt, smooth, shift={(-4 cm, 0 cm)}] (0,0) -- (0,2);
\draw[line width=.5pt, dashed, shift={(-4 cm, 0 cm)}] (0,0) -- (2,2);
\draw[line width=1pt, smooth, shift={(-4 cm, 0 cm)}] plot[samples=200,domain=.5:2] (\x, {1.73*(\x-.5)*(\x+.5)-1.5*(\x-.5)^3});

\draw[line width=.5pt, smooth] (0,0) -- (2,0);
\draw[line width=.5pt, smooth] (0,0) -- (0,2);
\draw[line width=.5pt, dashed] (0,0) -- (2,2);
\draw[line width=1pt, smooth] plot[samples=200,domain=0:2] (\x,{-.5*\x*(\x-3.46)});

\draw[line width=.5pt, smooth, shift={(4 cm, 0 cm)}] (0,0) -- (2,0);
\draw[line width=.5pt, smooth, shift={(4 cm, 0 cm)}] (0,0) -- (0,2);
\draw[line width=.5pt, dashed, shift={(4 cm, 0 cm)}] (0,0) -- (2,2);
\draw[line width=1pt, smooth, shift={(4 cm, 0 cm)}] plot[samples=200,domain=0:2] (\x,{.5+.58*\x*(1+.04*(\x^2-\x^3))});

\end{tikzpicture}

\caption{The number of intersections with the diagonal (dashed line) decreases by at least one through a standard transition.}\label{fig:i-decrease-sing}

\end{figure}
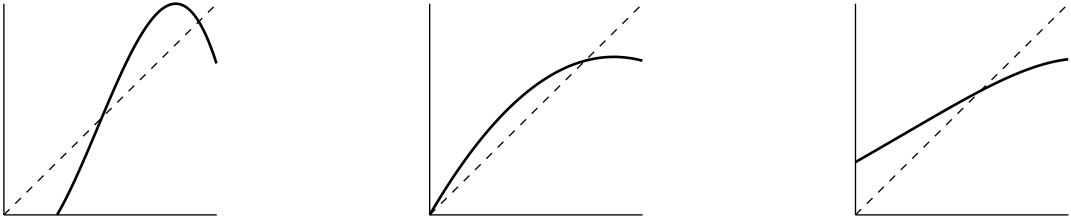

Due to our symmetry assumptions and our choice of the line $\ell$, after reflecting with respect to the diagonal $x_1 = x_2$ we find ourselves back in the initial setup.
As a consequence, we can repeat the previous reasoning every time the two triple junctions of $\G(t)$ coalesce into the origin, and as $i(t)$ cannot decrease indefinitely, it must become constant for sufficiently large $t>0$, after which there are no more type-0 singularities.
We can thus obtain an extended network flow as in Definition \ref{def:extnetflow}.

The steps described above carry almost identically to the other types of networks, with the only difference that we do not need to be concerned about avoiding any particular point $p$ because there are no external endpoints.

We conclude the proof by referencing once again Proposition \ref{prp:2juncs}, from which it follows that the flow of a tree can be extended indefinitely.
In contrast, for the $\th$-network and the eyeglasses cases, there must exist a time at which the two bounded regions collapse simultaneously, causing the $L^2$-norm of the curvature to blow up.
This can only happen as an eyeglasses-shaped network, as there is no self-similar shrinking $\th$-network \cite{BaldiHaussMantegazza-NoShrinkingTh2018}.

Finally, in the case of a tree, there exists a sequence $t_n \to \infty$ such that $\G(t_n)$ converges in $C^{1,\a} \cap W^{2,2}$ for every $\a \in (0,\frac{1}{2})$ to either a regular Steiner tree or a standard cross. If it converges to a Steiner tree, we can apply \cite{Pluda-Pozzetta-Lojasiewicz2023}*{Thm. 1.2} to establish the full smooth convergence of the flow as $t \to \infty$.
Otherwise, regardless of the sequence of times, the limit is a standard cross.
Thus, in this scenario as well, we observe full and smooth convergence.
\end{proof}


\section{Extension to the 2-sphere}\label{sec:extension}

We conclude this note by extending Theorem \ref{thm:main} to the network flow on the sphere $\S^2$ instead of $\R^2$.
Thanks to the theory developed in \cites{Angenent-ParabolicI1990, AngParabolicII91}, this extension can be achieved almost effortlessly, albeit with a mild change in the flow's behaviour near the maximal time of existence.
Since the outcome remains unchanged when considering a tree-like initial configuration, we will focus on the case of the $\th$-network and eyeglasses.
First, we state the analogous theorem, and then we explain how to adapt the arguments presented in Section \ref{sec:proof} to obtain the results.

\begin{thm}
Let $\G_0$ be a symmetric, closed and regular network with two triple junctions on the sphere $\S^2$.
Then there is a maximal time $T>0$ and a unique network flow $\{ \G(t) \}_{0 \leq t < T}$ with initial data $\G_0$, such that the set of its singular times is a finite subset of $(0,T)$, in particular there is no accumulation of singularities.
Moreover, either:
\begin{itemize}
    \item $T = \infty$ and $\lim_{t \to \infty} \G(t)$ is a minimal $\th$-network;
    \item or $T < \infty$ and the curvature blows up as one of the enclosed regions vanish with $t \uparrow T$.
\end{itemize}
\end{thm}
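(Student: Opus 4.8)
The plan is to transcribe the proof of Theorem \ref{thm:main} almost verbatim, replacing straight lines by great circles and invoking the Sturmian intersection theory of Angenent, which in \cites{Angenent-ParabolicI1990, AngParabolicII91} is developed precisely for curves evolving on a surface. First I would record that the symmetry with respect to the two perpendicular great circles is preserved by the flow: the reflections across these circles are isometries of $\S^2$ and the network flow is geometric, so a symmetric initial datum evolves through symmetric networks as long as it remains regular; moreover the argument persists across each type-0 singularity, since the unique expander flowing out of a standard cross inherits the symmetries of the cross by Proposition \ref{prp:irreg-short-time} together with \cite{Mazzeo-Saez-2011}*{Prp. 2.2}. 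This reduces the flow to a single defining curve $\g\colon [0,1]\times[0,T)\to\S^2$ contained in one of the four regions cut out by the symmetry circles, with the triple junction $\g(0,t)$ on one circle with fixed tangent and $\g(1,t)$ a free point on the perpendicular circle satisfying the appropriate orthogonality condition in the $\th$-network and eyeglasses cases. As before, a type-0 singularity forms exactly when $\g(0,t)$ reaches the common intersection point of the two symmetry circles.

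Next I would count intersections with a fixed great circle. Great circles are geodesics, hence static solutions of the curve shortening flow on $\S^2$, so they play the role of the line $\ell$ in the planar argument. Choosing a great circle $\ell$ through this intersection point making an angle in $(\frac{\pi}{6},\frac{\pi}{3})$ with one symmetry circle, I would set $i(t):=\#\{x:\g(x,t)\in\ell\}$ and apply the spherical version of Proposition \ref{prp:intersecs} to see that $i$ is finite and nonincreasing while the junction stays away from that point. The local analysis across a type-0 singularity is unchanged: after restarting via Proposition \ref{prp:irreg-short-time} one writes $\g$ as a graph in geodesic normal coordinates centred at the singular point and combines the lower bound $w(0,t)\geq c\sqrt{t-T}$ on the shortest curve from \cite{Ilmanen-Neves-Schulze-Shortexistence-2019}*{Thm. 1.1} with the maximum principle, so that $i$ drops by at least one across each singular time. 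Since $i$ is a nonnegative integer it cannot decrease indefinitely, forcing the set of singular times to be finite, exactly as in Section \ref{sec:proof}.

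The genuinely new feature, and the main obstacle, lies in the long-time behaviour. Length remains a Lyapunov functional, but in contrast to the plane there now exist nondegenerate minimal geodesic $\th$-networks—three great-circle arcs meeting at angles $2\pi/3$—so a second scenario becomes available. The dichotomy I would establish is this: if the two enclosed regions shrink to zero in finite time, the curvature must blow up, and by the absence of any self-similar shrinking $\th$-network \cite{BaldiHaussMantegazza-NoShrinkingTh2018} (a planar statement that still governs the local blow-up after rescaling) this can only happen in the eyeglasses configuration, yielding $T<\infty$; otherwise the flow exists for all time with curvature bounded along a sequence of times, and by monotonicity of length together with the usual compactness one extracts a subsequential limit that is a critical point of length, that is, a minimal $\th$-network. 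Upgrading this subconvergence to full convergence as $t\to\infty$ is where the real work sits: I would appeal to a \L{}ojasiewicz--Simon inequality near the minimal network in the spirit of \cite{Pluda-Pozzetta-Lojasiewicz2023}*{Thm. 1.2}, whose hypotheses must be verified for geodesic $\th$-networks on $\S^2$. The eyeglasses cannot stabilise, since any limit must be minimal and no minimal eyeglasses configuration exists, which is precisely why the $T=\infty$ alternative produces only minimal $\th$-networks.
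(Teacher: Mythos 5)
Your treatment of the finiteness of singular times is essentially identical to the paper's: great circles are geodesics, hence static solutions; one counts intersections with a great circle through the centre of symmetry at an angle in $(\frac{\pi}{6},\frac{\pi}{3})$ to a symmetry circle; across a type-0 singularity one writes the evolution as a graph via the exponential map, where the equation stays strictly parabolic, and the maximum principle (together with the lower bound of \cite{Ilmanen-Neves-Schulze-Shortexistence-2019}) forces the intersection number to drop. This is exactly the paper's argument and that part of your proposal is fine.

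The genuine problem is your finite-time alternative, where you transplant the planar conclusion that blow-up ``can only happen in the eyeglasses configuration'' as ``the two enclosed regions'' shrink. This is planar reasoning and it fails on $\S^2$; it is precisely the ``mild change in the flow's behaviour near the maximal time'' that the paper flags. On the sphere every complementary region is compact, so a symmetric $\th$-network bounds \emph{three} regions: the two congruent ones adjacent to the middle edge, and the region bounded by the two outer edges (the planar ``exterior''). By Gauss--Bonnet, a region with $n$ triple junctions on its boundary evolves by $A'(t) = A - 2\pi + n\pi/3$, so for the $\th$-network ($n=2$ for each region) any region starting with area below $4\pi/3$ collapses in finite time, and this may perfectly well be the third region: the two outer edges then come together, the curvature blows up, and the network is still $\th$-shaped at the singular time, with no eyeglasses transition ever occurring. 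The non-existence of a self-similarly shrinking $\th$-network \cite{BaldiHaussMantegazza-NoShrinkingTh2018} rules out the whole network collapsing at a point; it does not rule out this single-region collapse. This is why the statement on the sphere only asserts that \emph{one} of the enclosed regions vanishes and, unlike the planar theorem, makes no claim about an eyeglasses shape; the paper's proof accordingly concludes only the dichotomy: either the curvature becomes unbounded as some enclosed region vanishes, or the flow is global and converges to a minimal network, which must be a $\th$-network because minimal eyeglasses and figure-eight networks do not exist on $\S^2$ (a geodesic loop is a great circle and cannot close up with a corner). A smaller remark: your appeal to a \L{}ojasiewicz--Simon inequality for full convergence in the global case, which you yourself flag as unverified for geodesic networks on $\S^2$, is an obligation the paper does not take on; it settles the global case by the classification of minimal limits alone.
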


In this context, “symmetric” means symmetry with respect to a reflection across two perpendicular great circles, which are the geodesics of $\S^2$.

It is worth noting that all the definitions presented in Section \ref{sec:setup} straightforwardly apply to this case, and all the propositions in that section remain valid.

Regarding the proof, the only change is that instead of counting intersections with a straight line, we use a great circle passing through the centre of symmetry of the network, making an angle greater than $\pi/6$ but less than $\pi/3$ with respect to a chosen great circle of symmetry.
The analogue of Proposition \ref{prp:intersecs} asserts that this number decreases during the evolution.
Across a type-0 singularity, we again represent the evolution locally as a graph over the chosen great circle using the exponential map.
The resulting equation for the evolution remains strictly parabolic, and the application of the maximum principle yields the desired strict monotonicity.
We therefore conclude that the flow can be extended until the curvature becomes unbounded as an enclosed region vanishes, or it can be extended indefinitely and converges to a minimal network.
In this case, the minimal network must be a $\th$-network, as minimal eyeglasses or 8-figures do not exist on the 2-sphere.
In particular, infinity is excluded as a singular time.



\begin{bibdiv}
\begin{biblist}

\bib{AngZero88}{article}{
      author={Angenent, Sigurd},
       title={The zero set of a solution of a parabolic equation},
        date={1988},
        ISSN={0075-4102},
     journal={J. Reine Angew. Math.},
      volume={390},
       pages={79\ndash 96},
         url={https://doi.org/10.1515/crll.1988.390.79},
      review={\MR{953678}},
}

\bib{Angenent-ParabolicI1990}{article}{
      author={Angenent, Sigurd},
       title={Parabolic equations for curves on surfaces. {I}. {C}urves with
  {$p$}-integrable curvature},
        date={1990},
        ISSN={0003-486X,1939-8980},
     journal={Ann. of Math. (2)},
      volume={132},
      number={3},
       pages={451\ndash 483},
         url={https://doi.org/10.2307/1971426},
      review={\MR{1078266}},
}

\bib{AngNodal91}{article}{
      author={Angenent, Sigurd},
       title={Nodal properties of solutions of parabolic equations},
        date={1991},
        ISSN={0035-7596},
     journal={Rocky Mountain J. Math.},
      volume={21},
      number={2},
       pages={585\ndash 592},
         url={https://doi.org/10.1216/rmjm/1181072953},
        note={Current directions in nonlinear partial differential equations
  (Provo, UT, 1987)},
      review={\MR{1121527}},
}

\bib{AngSing91}{article}{
      author={Angenent, Sigurd},
       title={On the formation of singularities in the curve shortening flow},
        date={1991},
        ISSN={0022-040X},
     journal={J. Differential Geom.},
      volume={33},
      number={3},
       pages={601\ndash 633},
         url={http://projecteuclid.org/euclid.jdg/1214446558},
      review={\MR{1100205}},
}

\bib{AngParabolicII91}{article}{
      author={Angenent, Sigurd},
       title={Parabolic equations for curves on surfaces. {II}.
  {I}ntersections, blow-up and generalized solutions},
        date={1991},
        ISSN={0003-486X},
     journal={Ann. of Math. (2)},
      volume={133},
      number={1},
       pages={171\ndash 215},
         url={https://doi.org/10.2307/2944327},
      review={\MR{1087347}},
}

\bib{BaldiHaussMantegazza-NoShrinkingTh2018}{article}{
      author={Baldi, Pietro},
      author={Haus, Emanuele},
      author={Mantegazza, Carlo},
       title={Non-existence of {$\theta$}-shaped self-similarly shrinking
  networks moving by curvature},
        date={2018},
        ISSN={0360-5302,1532-4133},
     journal={Comm. Partial Differential Equations},
      volume={43},
      number={3},
       pages={403\ndash 427},
         url={https://doi.org/10.1080/03605302.2018.1446162},
      review={\MR{3804202}},
}

\bib{Bellettini-Novaga-Lens2011}{article}{
      author={Bellettini, Giovanni},
      author={Novaga, Matteo},
       title={Curvature evolution of nonconvex lens-shaped domains},
        date={2011},
        ISSN={0075-4102,1435-5345},
     journal={J. Reine Angew. Math.},
      volume={656},
       pages={17\ndash 46},
         url={https://doi.org/10.1515/CRELLE.2011.041},
      review={\MR{2818854}},
}

\bib{Bronsard-Reitich-1993}{article}{
      author={Bronsard, Lia},
      author={Reitich, Fernando},
       title={On three-phase boundary motion and the singular limit of a
  vector-valued {G}inzburg--{L}andau equation},
        date={1993},
        ISSN={0003-9527},
     journal={Arch. Rational Mech. Anal.},
      volume={124},
      number={4},
       pages={355\ndash 379},
         url={https://doi.org/10.1007/BF00375607},
      review={\MR{1240580}},
}

\bib{GageHamilton-Convex1986}{article}{
      author={Gage, Michael},
      author={Hamilton, Richard~S.},
       title={The heat equation shrinking convex plane curves},
        date={1986},
        ISSN={0022-040X,1945-743X},
     journal={J. Differential Geom.},
      volume={23},
      number={1},
       pages={69\ndash 96},
         url={http://projecteuclid.org/euclid.jdg/1214439902},
      review={\MR{840401}},
}

\bib{Goesswein-Menzel-Pluda-Existence2023}{article}{
      author={G\"{o}{\ss}wein, Michael},
      author={Menzel, Julia},
      author={Pluda, Alessandra},
       title={Existence and uniqueness of the motion by curvature of regular
  networks},
        date={2023},
        ISSN={1463-9963,1463-9971},
     journal={Interfaces Free Bound.},
      volume={25},
      number={1},
       pages={109\ndash 154},
         url={https://doi.org/10.4171/ifb/477},
      review={\MR{4578340}},
}

\bib{Grayson-RoundPoints1987}{article}{
      author={Grayson, Matthew~A.},
       title={The heat equation shrinks embedded plane curves to round points},
        date={1987},
        ISSN={0022-040X,1945-743X},
     journal={J. Differential Geom.},
      volume={26},
      number={2},
       pages={285\ndash 314},
         url={http://projecteuclid.org/euclid.jdg/1214441371},
      review={\MR{906392}},
}

\bib{Ilmanen-Neves-Schulze-Shortexistence-2019}{article}{
      author={Ilmanen, Tom},
      author={Neves, Andr\'{e}},
      author={Schulze, Felix},
       title={On short time existence for the planar network flow},
        date={2019},
        ISSN={0022-040X,1945-743X},
     journal={J. Differential Geom.},
      volume={111},
      number={1},
       pages={39\ndash 89},
         url={https://doi.org/10.4310/jdg/1547607687},
      review={\MR{3909904}},
}

\bib{LMPS-2021}{misc}{
      author={Lira, Jorge},
      author={Mazzeo, Rafe},
      author={Pluda, Alessandra},
      author={Saez, Mariel},
       title={Short-time existence for the network flow},
        date={2021},
         url={https://arxiv.org/abs/2101.04302},
        note={Available at \eprint{2101.04302}},
}

\bib{MMN-NetsII-2016}{article}{
      author={Magni, Annibale},
      author={Mantegazza, Carlo},
      author={Novaga, Matteo},
       title={Motion by curvature of planar networks, {II}},
        date={2016},
        ISSN={0391-173X,2036-2145},
     journal={Ann. Sc. Norm. Super. Pisa Cl. Sci. (5)},
      volume={15},
       pages={117\ndash 144},
      review={\MR{3495423}},
}

\bib{MNP-2juncs-2017}{article}{
      author={Mantegazza, Carlo},
      author={Novaga, Matteo},
      author={Pluda, Alessandra},
       title={Motion by curvature of networks with two triple junctions},
        date={2017},
        ISSN={2353-3382},
     journal={Geom. Flows},
      volume={2},
      number={1},
       pages={18\ndash 48},
         url={https://doi.org/10.1515/geofl-2016-0002},
      review={\MR{3565976}},
}

\bib{MNP-Type0trees2022}{article}{
      author={Mantegazza, Carlo},
      author={Novaga, Matteo},
      author={Pluda, Alessandra},
       title={Type-0 singularities in the network flow---evolution of trees},
        date={2022},
        ISSN={0075-4102,1435-5345},
     journal={J. Reine Angew. Math.},
      volume={792},
       pages={189\ndash 221},
         url={https://doi.org/10.1515/crelle-2022-0055},
      review={\MR{4504095}},
}

\bib{MNPS-Survey2018}{misc}{
      author={Mantegazza, Carlo},
      author={Novaga, Matteo},
      author={Pluda, Alessandra},
      author={Schulze, Felix},
       title={Evolution of networks with multiple junctions},
        date={2018},
         url={https://arxiv.org/abs/1611.08254},
        note={Available at \eprint{1611.08254}},
}

\bib{MNT-NetsI-2004}{article}{
      author={Mantegazza, Carlo},
      author={Novaga, Matteo},
      author={Tortorelli, Vincenzo~Maria},
       title={Motion by curvature of planar networks},
        date={2004},
        ISSN={0391-173X,2036-2145},
     journal={Ann. Sc. Norm. Super. Pisa Cl. Sci. (5)},
      volume={3},
      number={2},
       pages={235\ndash 324},
      review={\MR{2075985}},
}

\bib{Mazzeo-Saez-2011}{incollection}{
      author={Mazzeo, Rafe},
      author={Saez, Mariel},
       title={Self-similar expanding solutions for the planar network flow},
        date={2011},
   booktitle={Analytic aspects of problems in {R}iemannian geometry: elliptic
  {PDE}s, solitons and computer imaging},
      series={S\'{e}min. Congr.},
      volume={22},
   publisher={Soc. Math. France, Paris},
       pages={159\ndash 173},
      review={\MR{3060453}},
}

\bib{Pluda-Pozzetta-Lojasiewicz2023}{article}{
      author={Pluda, Alessandra},
      author={Pozzetta, Marco},
       title={Łojasiewicz--{S}imon inequalities for minimal networks:
  stability and convergence},
        date={2023},
     journal={Math. Ann.},
         url={https://doi.org/10.1007/s00208-023-02714-7},
}

\bib{Schnuerer-etal-Lens2011}{article}{
      author={Schn\"{u}rer, Oliver~C.},
      author={Azouani, Abderrahim},
      author={Georgi, Marc},
      author={Hell, Juliette},
      author={Jangle, Nihar},
      author={Koeller, Amos},
      author={Marxen, Tobias},
      author={Ritthaler, Sandra},
      author={S\'{a}ez, Mariel},
      author={Schulze, Felix},
      author={Smith, Brian},
       title={Evolution of convex lens-shaped networks under the curve shortening flow},
        date={2011},
        ISSN={0002-9947,1088-6850},
     journal={Trans. Amer. Math. Soc.},
      volume={363},
      number={5},
       pages={2265\ndash 2294},
         url={https://doi.org/10.1090/S0002-9947-2010-04820-2},
      review={\MR{2763716}},
}

\end{biblist}
\end{bibdiv}

\end{document}